\newtheorem{theorem}{Theorem}[section]
\newtheorem{lemma}[theorem]{Lemma}
\newtheorem{corollary}[theorem]{Corollary}
\theoremstyle{definition}
\theoremstyle{remark}
\newtheorem{remark}[theorem]{Remark}
\numberwithin{equation}{section}
\newcommand{\p}{\mathbf{p}}
\renewcommand{\Re} {\operatorname{Re}}
\begin{document}

\title[$p$-adic  fractal string of rational dimensions and Cantor strings]
 {$p$-adic fractal strings of arbitrary rational dimensions and Cantor strings}

\author{Michel L. Lapidus}
\address{University of California, Riverside\\
Department of Mathematics\\
900 Big Springs Road\\
Riverside, CA 92521-0135 USA}
\email{lapidus@math.ucr.edu}
\thanks{The work of the first author (MLL) was partially supported by the US National Science Foundation (NSF) under the research grants DMS-0707524 and DMS-1107750, and by the Burton Jones Endowed Chair in Pure Mathematics (of which MLL was the chair holder at the University of California, Riverside, during the completion of this paper). }

\author{L\~u' H\`ung}
\address{C\^ongfu Garden}
\email{satnamo@youtube}

\author{Machiel van Frankenhuijsen}
\address{Utah Valley University\\
Department of Mathematics\\
800 West University Parkway\\
Orem, Utah 84058-5999 USA}
\email{vanframa@uvu.edu}

\subjclass[2000]{Fractal Geometry, Number Theory, $p$-adic Analysis}

\date{\today}

\dedicatory{This paper is dedicated to our advisors with gratitude and admiration.}
\keywords{Fractal geometry, number theory, $p$-adic analysis, fractal strings, zeta functions, complex dimensions, ad\`elic Cantor, Cantor--Smith, Euler and Euler--Riemann strings.}

\begin{abstract}
The local theory of complex dimensions for real and $p$-adic fractal strings describes oscillations that are intrinsic to the geometry, dynamics and spectrum 
 of archimedean and nonarchimedean fractal strings. 
We aim to develop a global theory of complex  dimensions for ad\`elic fractal strings in order to reveal the oscillatory nature of  ad\`elic fractal strings 
and to understand the Riemann hypothesis in terms of the vibrations and resonances of fractal strings. 

We present a simple and natural construction of self-similar $p$-adic fractal strings 
of any rational dimension in the closed unit interval $[0,1]$.
Moreover, as a first step towards a global theory of complex dimensions for ad\`elic fractal strings, 
 we construct an ad\`elic Cantor string  in the set of finite ad\`eles $\mathbb{A}_0$
as an infinite Cartesian product of every $p$-adic Cantor string,
as well as an ad\`elic Cantor--Smith string in the ring of ad\`eles $\mathbb{A}$
as a Cartesian product of the general Cantor string and the ad\`elic Cantor string. 
\end{abstract}

\maketitle

\tableofcontents

\section{Introduction}

The theory of complex  dimensions is a theory of oscillations that are intrinsic to fractal geometries. 
Complex dimensions provide a natural tool to reveal the oscillatory nature of fractality. 
Geometrically, a fractal is like a musical instrument tuned to play certain notes with frequencies essentially equal to the imaginary parts of the underlying complex dimensions
and with amplitudes essentially equal to the real parts of the underlying complex  dimensions.  
Physically, we can imagine a geometric wave propagating through the `space of scales' that lies beneath the shimmering surface of a fractal 
with the aforementioned frequencies and amplitudes.
Fractality is therefore defined as the presence of nonreal complex dimensions in the theory of complex dimensions for fractal strings and compact subsets of Euclidean spaces \cite{Lap4, Lap5, LvF1, LvF2, LvF3, LRZ}.

Following the examples of the $a$-string and of the ordinary Cantor string given by the first author in the early 1990s \cite{Lap1, Lap2, Lap3, LapPo1}, 
the notion of a fractal string was conceived and defined by the first author and Carl Pomerance in their investigation and resolution 
of the one-dimensional Weyl--Berry conjecture for fractal drums and its connection with the Riemann zeta function in 1993,
 \cite{LapPo1}.
The Riemann hypothesis for the Riemann zeta function turned out to be equivalent to the solvability of the inverse spectral problems for fractal strings, 
as was established by the first author and Helmut Maier in 1995,
 \cite{LapMa1}. 
The idea of complex dimensions started to emerge out of the work in \cite{Lap1, Lap2, Lap3, LapPo1} 
and was used heuristically in \cite{LapMa1} in the authors' 
spectral reformulation of the Riemann hypothesis for the Riemann zeta function~$\zeta(s)$
by means of  a pair of complex conjugate numbers $\omega=\sigma+it$ and~$\bar{\omega}=\sigma -it$
 that lie symmetrically above and below the real fractal dimension~$D=\sigma$ of a fractal string~$\mathcal L$,
where the geometric oscillations of order $\sigma$ 
disappear in the spectrum of~$\mathcal L$
if~$\zeta(\omega)=0$. 
The precise notion of complex dimensions $\omega \in \mathbb C$, 
defined as the visible poles $\omega $ of the geometric zeta function $\zeta_{\mathcal{L}}(s)$ associated with a real fractal string~$\mathcal{L}$,
was crystallized and rigorously developed by the first and third authors in \cite{LvF1} and then significantly extended in \cite{LvF2, LvF3},
as well as in many other works. 
 
A higher-dimensional theory of complex dimensions for compact subsets of Euclidean spaces was initiated by the first author and Erin Pearse
 in a direct calculation of a tube formula for the Koch snowflake curve in 2006,
 \cite{LapPe1},
then pursued by the same authors and Steffen Winter in 2011,
 \cite{LPW},
 and finally fully developed
 in the~2017 research monograph \cite{LRZ}
 by the first author,
Goran Radunovi\'c, and Darko $\check{Z}$ubrini\'c 
 after the introduction of the distance zeta function by the first author in 2009 
 that was inspired, in part, by a result of Reese Harvey and John Polking in their study of the removable singularities of solutions of certain linear partial differential equations \cite{HP}.

Motivated by the %idea of $p$-adic fractal strings \cite{Castro}
  $p$-adic string theory in $p$-adic mathematical physics \cite{VVZ} 
and the vision of a global theory of complex dimensions for ad\`elic fractal strings, 
the geometric theory of complex dimensions for fractal strings was extended to the nonarchimedean field of $p$-adic numbers $\mathbb{Q}_p$ by the authors of the articles 
\cite{LapLu1, LapLu2, LapLu3, LLvF1, LLvF2}. 
Similar to the theory of complex dimensions for real fractal strings, 
the theory of complex dimensions for $p$-adic fractal strings reveals the oscillatory nature intrinsic to the geometry of $p$-adic fractal strings  
via the explicit tube formulas for the volume of the inner $\varepsilon$-neighborhoods of $p$-adic fractal strings \cite{LapLu3, LLvF1, LLvF2}. 

A basic example of a fractal is the Cantor set, 
which is nothing with respect to the one-dimensional Lebesgue measure on the real line
and everything with respect to the zero-dimensional measure that counts points in a set since it contains as many points as the whole real line, 
yet itself contains no interval of nonzero length. 
The simplest construction of the Cantor set is by successively removing all the open middle thirds of the closed unit interval $[0, 1]$.
The Cantor set is an uncountably infinite set of points in $[0,1]$ that has zero length. 
Irrational numbers have the same property, but the Cantor set is closed, 
so it is not even dense in any interval, 
whereas the irrational numbers are dense in every interval. 

In order to illuminate the difficulties of integrating discontinuous functions,
Henry John Stephen Smith constructed a general Cantor set in 1875 by 
successively removing all $m-1$ segments from $m>2$ equal parts of the interval $[0,1]$;
``and exempt the last [open] segment from any subsequent division'' in every iteration \cite{Smith}.
Georg Ferdinand Ludwig Philipp Cantor independently 
introduced the Cantor ternary set in 1883
as the set of real numbers of the form
\[x=\frac{c_1}{3}+\cdots+\frac{c_v}{3^v}+\cdots,\]
where the digit $c_v$ is 0 or 2 for each integer $v\geq 1$,
as an example of a perfect set that is not everywhere dense;
see~\cite{Cantor,FJ}.
Through careful considerations of the Cantor set,
 Georg Cantor created set theory and helped lay the foundation of point-set topology. 
\begin{quote}
Aus dem Paradies, das Cantor uns geschaffen hat, soll uns niemand vertreiben k\"onnen \cite{AF}.\footnote{From his paradise Cantor with us unfolded,
 we hold our breath in awe,
 knowing we shall not be expelled.}
David Hilbert (1862--1943).
\end{quote}

In closing the introduction, we give an overview of the remaining of this article.
In the preliminary~\S2,
we describe the finite and infinite valuations on the field of rational numbers~$\mathbb Q$ and their correspondence with the absolute values on $\mathbb Q$. 
We also describe the nonarchimedean field of $p$-adic numbers $\mathbb{Q}_p$ 
and the ring of ad\`eles $\mathbb{A}$. 
In \S3, for each prime number $p$, 
we construct a self-similar $p$-adic fractal string $\mathcal{L}_p$ of dimension~$D$
and with oscillatory period $\p=\frac{2\pi}{m\log p}$,
 for any rational number $D=\frac{k}{m}\in [0,1]$.
The construction of the $p$-adic fractal string of dimension $D=1/2$ is especially interesting since it involves the diagonal of the digits.
In~\S4,
 for each prime number $p>2$,
we construct a $p$-adic Cantor string of dimension $D=\frac{\log(\frac{1+p}{2})}{\log p}$
and with oscillatory period $\p=\frac{2 \pi}{\log p}$. 
We also construct an ad\`elic Cantor--Smith string in $\mathbb{A}$ 
as an infinite product of the general Cantor string and every $p$-adic Cantor string. 
In \S5,
we construct an ad\`elic Euler--Riemann string $\mathcal{E}_{\mathbb A}$ in $\mathbb A$
with the Riemann zeta function as a possible geometric zeta function of $\mathcal{E}_{\mathbb A}$.
In \S6,
we envision that 
a natural global theory of complex dimensions for ad\`elic fractal strings would
provide a unified framework for understanding the vibrations and resonances in the geometry and the spectrum of ad\`elic fractal strings.

\section{Preliminaries}

By Ostrowski's theorem, 
the global field of rational numbers $\mathbb Q$ has,
up to equivalence,  
one archimedean valuation $v_{\infty}(x)=\log|x|_{\infty}$ and infinitely many nonarchimedean valuations $v_{p}(x)= -\mbox{ord}_p(x)\log p$, 
one for each prime number $p \in \mathcal P$,
 where $\mbox{ord}_p(x)$ is the power of $p$ in the prime factorization of a nonzero rational number 
 $x\in \mathbb{Q}^*$.\footnote{Here, $|x|_{\infty}$ denotes the usual absolute value of $x\in \mathbb{Q}^*$.}
The valuation of a number $x$ is positive when $x$ is far away from~$0$ in the $v$-adic topology
or more concretely,
 if the denominator of $x$ is divisible by~$p$.
For every number $x\in \mathbb Q^*,$
 we have the important \emph{Artin--Whaples sum formula\/}:
\[
\sum_{p\leq \infty}v_p(x)=0.
\]

There is a one-to-one correspondence between valuations and (equivalent classes of) absolute values on the field of rational numbers $\mathbb Q$.\footnote{Two absolute values on $\mathbb Q$ are said to be equivalent if one is a positive power of the other.}
Each valuation $v$ on $\mathbb Q$
 is assigned to a unique absolute value $|\cdot|_v$ by defining
$|x|_v=e^{v(x)}$,
 for any nonzero number $x\in \mathbb Q^*$,
 and $|0|_v=0$.
Conversely, every absolute value $|\cdot|$ on $\mathbb Q$ is assigned to a unique valuation~$v_{|\cdot|}$ by defining 
$v_{|\cdot|}(x)=\log |x|$,
 for all $x\in \mathbb Q$. 
Hence, the \emph{Artin--Whaples sum formula} becomes the \emph{Artin--Whaples product formula} via this correspondence:
\[
\prod_{p\leq \infty}|x|_p=1.
\]

A physical version of the id\`elic product formula was discovered by Peter Freund and Edward Witten in 1987 \cite{FW}:
the ordinary Veneziano string amplitude for the scattering of four open bosonic strings in their tachyon states is the inverse product of all its $p$-adic Veneziano string amplitudes;
\[
\prod_{p\in\mathcal P}\int_{\mathbb{Q}_p}|x|_p^a\cdot|1-x|_p^b~dx=1.
\]
``This factorization is equivalent to the functional equation for the Riemann zeta function'' \cite{FW}.

The topological completion of $\mathbb Q$ with respect to the metric topology induced by 
the infinite place of $\mathbb Q$
 is the local archimedean field of real numbers $\mathbb R=\mathbb{Q}_{\infty}$.
Similarly, the topological completion of $\mathbb Q$ with respect to the ultrametric topology induced by a finite place
is the local nonarchimedean field of $p$-adic numbers $\mathbb{Q}_p$.

The (closed) unit ball in the nonarchimedean field of $p$-adic numbers 
$\mathbb{Q}_p$ is the \emph{ring} of $p$-adic integers $\mathbb{Z}_p$;
it is closed under multiplication \emph{and addition}.
 Moreover, 
every $p$-adic number $x\in \mathbb{Q}_p$ has a unique $p$-adic expansion as
\[
x=a_kp^k+\cdots + a_0 + a_1p + a_2p^2+\cdots,\]
for some integer $k\in \mathbb Z$ such that $a_k\neq 0$ and where all the digits $a_j\in\{0, 1, \ldots, p-1\}.$
In particular, every $p$-adic integer $y\in \mathbb{Z}_p$ has a unique $p$-adic expansion as
\[y=a_0 + a_1p + a_2p^2+\cdots.\]

The set of finite ad\`eles $\mathbb{A}_0$ is the set of sequences of $p$-adic numbers $x_p$ indexed by the  prime numbers such that $x_p\in \mathbb{Z}_p$,
 for almost all primes (i.e.,
for all but finitely many primes):
\[
\mathbb{A}_0=\{ (x_2, x_3, x_5, \ldots) ~|~ x_p\in \mathbb{Q}_p \mbox{ for all $p$ and $x_p\in \mathbb{Z}_p$ for almost all } p\}.
\]
The set of ad\`eles $\mathbb A$ also has an archimedean component, 
\[
\mathbb A=\mathbb R \times \mathbb{A}_0.
\]

Topologically,
 the set of finite ad\`eles $\mathbb{A}_0$ is zero-dimensional because it is totally disconnected, but combinatorially,
 it is
 one-dimensional, just like $\mathbb{Z}_p$;
see Remark~\ref{RM1}.
On the other hand, the set of ad\`eles $\mathbb A$ is one--dimensional, both topologically and combinatorially. 
Therefore, every ad\`elic fractal string based on either $\mathbb{A}_0$ or $\mathbb A$ has fractal dimension at most 1. 
This Minkowski--Bouligand dimension gives combinatorial information about the number of balls required to cover the set. 
Here, balls are translates of the basic neighborhoods of zero, 
\[
\prod_{p\in \mathcal P}p^{r_p}\mathbb{Z}_p,
\]
where the integral exponent $r_p\neq 0$ for only finitely many prime numbers $p$. 
These are the usual basic open sets that one uses to define the Lebesgue integral and the Fourier transform in number theory. For example, see Tate's thesis \cite{Ta}.

\section{$p$-adic fractal strings with rational dimensions between $0$ and $1$}

In looking for a geometric way to create an ad\`elic fractal string and a global theory of complex dimensions, 
we discovered a simple and natural construction of $p$-adic fractal strings of any rational dimension in the critical interval $[0, 1]$. 
The simplest example is of dimension $D=\frac{1}{2},$ which is particularly interesting since it involves the diagonal of the digits.

Given a prime number $p$ in the infinite set of primes $\mathcal P=\{2, 3, 5, 7, 11, \ldots \}$, let us 
consider the ring of $p$-adic integers $\mathbb{Z}_p$, taken two digits at a time:
\[
\mathbb{Z}_p=\{ (a_0 + b_0p) + (a_1 + b_1p)p^2 + (a_2 + b_2p)p^4 + \cdots ~|~ a_j, b_j \in \{0, 1, 2, \ldots, p-1\}\}.
\]
Let $S$ be a subset of all such pairs of digits containing exactly $p$ pairs. 
For example, we can take $S$ to be the diagonal,
\[
S=\{ 0, 1+p, 2+2p, 3+3p, \ldots, p-1+(p-1)p\}.
\]
Then $\mathbb{Z}_p$ contains $p^2-p$ copies of $p^2\mathbb{Z}_p,$ namely, 
\[
a+bp+p^2\mathbb{Z}_p, \mbox{ for each }  a+bp\in \{0,1, 2, \ldots, p^2-1\}- S.
\]

These are the first $p^2-p$ substrings of a $p$-adic fractal string of length $p^{-2}$.
By continuing this process with the remaining $p$ subintervals $a+bp+p^2\mathbb{Z}_p$ for each $a+bp\in S$,
we obtain a self-similar $p$-adic fractal string $\mathcal{L}_p$ inside $\mathbb{Z}_p$ 
that is fixed by the iterated function system of similarity contraction mappings from $\mathbb{Z}_p$ into itself:
\[ f_{a+bp}(x)= a+bp+p^2x, \mbox{ for each } a+bp\notin S.\]
The lengths of the string $\mathcal{L}_p$ are 
$l_n=p^{-2n},$
 with multiplicity
 $\mu_n=(p^2-p)p^{n-1}$
for each positive integer $n$. 
Therefore, the geometric zeta function $\zeta_{\mathcal{L}_p}(s)$ is the meromorphic continuation to all of $\mathbb C$
of the following convergent geometric series of $\mathcal{L}_p$:
\[
\sum_{n=1}^{\infty} \mu_n\cdot l_n^s=
 \sum_{n=1}^{\infty}\frac{(p^2-p)p^{n-1}}{p^{2ns}}
=\frac{p^2-p}{p^{2s}-p},
\mbox{ for }\Re(s)>\frac{1}{2};
\]
that is to say,
\[
\zeta_{\mathcal{L}_p}(s)
=\frac{p-1}{p^{2(s-\frac{1}{2})}-1},
 \mbox{ for all } s\in \mathbb C. 
\]
Since the complex dimensions of a fractal string $\mathcal{L}_p$ are defined as the poles of the geometric zeta function $\zeta_{\mathcal{L}_p}(s)$, 
the set of complex dimension of $\mathcal{L}_p$ is
\[\mathcal{D}_{\mathcal{L}_p}=
\{D + in\p~|~ n\in \mathbb Z\},\]
where $D=\frac{1}{2}$ is the Minkowski--Bouligand dimension 
of  $\mathcal{L}_p$ 
and 
$\p=\frac{\pi}{\log p}$ is its oscillatory period \cite{LLvF2}.
The residue of  $\zeta_{\mathcal{L}_p}(s)$ at the midfractal dimension $D=\frac{1}{2}$ is
 \[\mbox{res}(\zeta_{\mathcal{L}_p}(s); \frac{1}{2})=\frac{p-1}{2\log p},\]
and this is also the residue of $\zeta_{\mathcal{L}_p}(s)$ at every other complex dimension $\omega \in \mathcal{D}_{\mathcal{L}_p}$,
by the periodicity of $\zeta_{\mathcal{L}_p}(s)$.

We thus have established the following result.  

\begin{theorem}\label{Theorem 1/2}
For each $p\in \mathcal P$, there is a self-similar $p$-adic fractal string $\mathcal{L}_p$ of Minkowski--Bouligand dimension $D=\frac{1}{2}$
and with oscillatory period $\p=\frac{\pi}{\log p}$.
\end{theorem}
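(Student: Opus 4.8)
The plan is to exhibit $\mathcal{L}_p$ explicitly as the self-similar $p$-adic fractal string generated by an iterated function system on $\mathbb{Z}_p$, to compute its geometric zeta function in closed form, and to read off both the Minkowski--Bouligand dimension and the oscillatory period from the location of the poles.

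First I would fix a subset $S \subset \{0, 1, \ldots, p^2-1\}$ with exactly $p$ elements --- for concreteness the diagonal $S = \{a + ap : 0 \le a \le p-1\}$, though any $p$-element subset serves equally well --- and introduce the $p^2 - p$ similarity contraction maps $f_{a+bp}(x) = a + bp + p^2 x$ indexed by the pairs $a + bp \notin S$. Each such map sends $\mathbb{Z}_p$ into the ball $a + bp + p^2\mathbb{Z}_p$ with $p$-adic scaling ratio $\abs{p^2}_p = p^{-2} < 1$, and the images are pairwise disjoint; this is exactly what is required for $\{f_{a+bp}\}$ to define a self-similar $p$-adic fractal string in the sense of the $p$-adic theory of complex dimensions developed in the works cited in the Introduction. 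Iterating, I would check that after $n$ stages there remain precisely $p^n$ uncarved balls of Haar measure $p^{-2n}$ (one for each $n$-letter word over $S$) while exactly $\mu_n = (p^2-p)p^{n-1}$ new lengths $l_n = p^{-2n}$ have been produced; this bookkeeping is the one place where care is needed, but it is a routine induction.

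Next I would assemble the geometric zeta function. Summing the geometric series, $\zeta_{\mathcal{L}_p}(s) = \sum_{n\geq 1}\mu_n l_n^s = (p^2-p)p^{-1}\sum_{n\geq 1}(p^{1-2s})^n$, which converges for $\Re(s) > \frac12$ to
\[ \zeta_{\mathcal{L}_p}(s) = \frac{p^2-p}{p^{2s}-p} = \frac{p-1}{p^{2s-1}-1}, \]
a rational function of $p^{2s}$ and hence meromorphic on all of $\mathbb{C}$. Its poles are the solutions of $p^{2s-1} = 1$, namely $s = \frac12 + i n \frac{\pi}{\log p}$ for $n \in \mathbb{Z}$; equivalently, $\mathcal{D}_{\mathcal{L}_p} = \{\frac12 + i n\p : n \in \mathbb{Z}\}$ with $\p = \frac{\pi}{\log p}$, so the imaginary parts of the complex dimensions form an arithmetic progression of common difference $\p$, which is the oscillatory period.

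Finally, to identify $D = \frac12$ with the Minkowski--Bouligand dimension of $\mathcal{L}_p$, I would appeal to the general principle --- valid for $p$-adic fractal strings just as for archimedean ones --- that the Minkowski dimension of a fractal string equals the abscissa of convergence of its geometric zeta function, which here is visibly $\frac12$; alternatively, one can argue directly that a minimal covering of $\mathcal{L}_p$ at scale $\varepsilon = p^{-2n}$ uses on the order of $p^n = \varepsilon^{-1/2}$ balls, so the box-counting exponent is $\frac12$. The hard part, such as it is, is not any single delicate estimate but the conceptual step of matching the combinatorial construction to the analytic machinery: verifying that the length multiset $(l_n, \mu_n)$ is exactly as claimed, that no cancellation removes the pole at $s = \frac12$ (the residue there is $\frac{p-1}{2\log p} \neq 0$), and that the $p$-adic volume and dimension conventions are applied consistently throughout.
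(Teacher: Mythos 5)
Your proposal is correct and follows essentially the same route as the paper: the same diagonal set $S$, the same $p^2-p$ maps $f_{a+bp}(x)=a+bp+p^2x$, the same length data $l_n=p^{-2n}$, $\mu_n=(p^2-p)p^{n-1}$, the same closed form $\zeta_{\mathcal{L}_p}(s)=\frac{p-1}{p^{2(s-\frac12)}-1}$, and the same reading-off of $D=\frac12$ and $\p=\frac{\pi}{\log p}$ from the poles. The only addition is your brief justification that the abscissa of convergence (equivalently a box-count at scale $p^{-2n}$) identifies $D=\frac12$ as the Minkowski--Bouligand dimension, a point the paper takes as standard and does not spell out.
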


\begin{corollary}
For each positive integer $v$ and a prime number $p\in \mathcal P$, 
there is a  $p$-adic fractal string $\mathcal{L}_p$ of 
Minkowski--Bouligand dimension $D=\frac{1}{2}\log_p v$.
 %and with oscillatory period $\p=\frac{\pi}{\log p}$.
\end{corollary}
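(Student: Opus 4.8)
The plan is to reuse the construction of Theorem~\ref{Theorem 1/2} with a single modification: retain $v$ of the $p^2$ pairs of digits instead of exactly $p$ of them. Fix the prime $p$ and the integer $v$; since a $p$-adic fractal string lives inside the combinatorially one-dimensional ring $\mathbb{Z}_p$, its Minkowski--Bouligand dimension is at most $1$, so the prescribed value $D=\frac{1}{2}\log_p v=\log_{p^2}v$ is admissible precisely when $1\le v\le p^2$, and I would assume this throughout (with $v=p^2$ the degenerate, empty-string case). As in the theorem, write $\mathbb{Z}_p$ two digits at a time, choose any subset $S\subseteq\{0,1,\dots,p^2-1\}$ with $\abs{S}=v$ --- for $v=p$ one may take the diagonal, but any $v$-element subset works equally well --- and let $\mathcal{L}_p$ be the self-similar $p$-adic fractal string fixed by the iterated function system of similarity contractions $f_{a+bp}(x)=a+bp+p^2x$, one for each $a+bp\notin S$. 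The first thing I would verify is that this produces a bona fide $p$-adic fractal string: distinct residues modulo $p^2$ index pairwise disjoint sub-balls $a+bp+p^2\mathbb{Z}_p$ of $\mathbb{Z}_p$, and iterating only inside the $v$ retained balls yields, at each subsequent level, a fresh batch of pairwise disjoint balls removed from the generators of the previous stage.

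Next I would record the scales of $\mathcal{L}_p$. At level $n$ the construction removes $\mu_n=(p^2-v)v^{\,n-1}$ balls of length $l_n=p^{-2n}$, since each of the $v^{\,n-1}$ balls of radius $p^{-2(n-1)}$ surviving from the previous stage splits into $p^2$ balls of radius $p^{-2n}$, of which $p^2-v$ are removed and $v$ are retained for the next iteration. Hence the geometric zeta function of $\mathcal{L}_p$ is the meromorphic continuation to all of $\mathbb C$ of the convergent geometric series
\[
\zeta_{\mathcal{L}_p}(s)=\sum_{n=1}^{\infty}\mu_n\, l_n^{\,s}=(p^2-v)\sum_{n=1}^{\infty}\frac{v^{\,n-1}}{p^{2ns}}=\frac{p^2-v}{p^{2s}-v},\qquad \Re(s)>\frac{1}{2}\log_p v,
\]
the underlying series being geometric with ratio $v\,p^{-2s}$, which converges exactly when $p^{2\Re(s)}>v$.

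From this closed form the corollary follows at once, just as for the string of Theorem~\ref{Theorem 1/2}. The Minkowski--Bouligand dimension of a self-similar $p$-adic string equals the abscissa of convergence of $\sum_n\mu_n l_n^s$, which here is $D=\frac{\log v}{2\log p}=\frac{1}{2}\log_p v=\log_{p^2}v$, as claimed. The complex dimensions of $\mathcal{L}_p$ are the poles of $\zeta_{\mathcal{L}_p}(s)$, namely the solutions of $p^{2s}=v$, so $\mathcal{D}_{\mathcal{L}_p}=\{\,\frac{1}{2}\log_p v+in\p~|~n\in\mathbb Z\,\}$ and $\mathcal{L}_p$ is again periodic with oscillatory period $\p=\frac{\pi}{\log p}$; the case $v=p$ recovers Theorem~\ref{Theorem 1/2}.

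I do not anticipate a genuine obstacle: the argument is a direct generalization of the theorem, and the computations are routine manipulations of a geometric series. The one point that deserves care --- and which I would state explicitly --- is the admissibility range: the recipe is meaningful exactly for $1\le v\le p^2$, equivalently $0\le D\le 1$, and no $p$-adic fractal string of dimension $\frac{1}{2}\log_p v$ can exist when $v>p^2$. It is also worth remarking that $\frac{1}{2}\log_p v$ is rational only when $v$ is an integral power of $p$, so this corollary in fact furnishes $p$-adic fractal strings of irrational dimension as well, all sharing the oscillatory period $\p=\pi/\log p$.
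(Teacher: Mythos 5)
Your proof matches the paper's argument essentially verbatim: both take $S\subseteq\{0,1,\dots,p^2-1\}$ of cardinality $v$, compute $\zeta_{\mathcal{L}_p}(s)=\frac{p^2-v}{p^{2s}-v}$ from the same geometric series, and read off the dimension $D=\frac{1}{2}\log_p v$ as the real part of the poles. Your explicit remark that the construction is only admissible for $1\le v\le p^2$ (so that $D\in[0,1]$) is a worthwhile clarification that the paper's statement leaves implicit, but it does not change the approach.
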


\begin{proof}
For $S$ of cardinality $v$, 
the geometric zeta function of the corresponding $p$-adic fractal string $\mathcal{L}_p$ is given by
\[
\zeta_{\mathcal{L}_p}(s)=
\sum_{n=1}^{\infty}\frac{(p^2-v){v}^{n-1}}{p^{2ns}}
=\frac{p^2-v}{p^{2s}-v},~\mbox{for} ~\Re(s)>\frac{\log v}{\log p^2}.
\]
Therefore, $\zeta_{\mathcal{L}_p}(s)$ 
admits a meromorphic continuation to all of $\mathbb C$, 
given by the same expression as above;
 hence, 
\[
\mathcal{D}_{\mathcal{L}_p}=\Big\{\frac{1}{2}\log_p v+ \frac{\pi i n}{\log p}~\Big |~ n\in \mathbb Z\Big\}
\]
is the set of
complex dimensions of $\mathcal{L}_p$,
as desired.
This completes the proof of the corollary.
\end{proof}

The above construction of a $p$-adic fractal string 
of Minkowski--Bouligand  dimension $D=\frac{1}{2}$
reminds one of the intersection of the graph of Frobenius with the diagonal 
in Bombieri's proof of the Riemann hypothesis for zeta functions of curves over finite fields \cite{MvF}.
This may eventually give rise to a fractal approach to translating his proof of the Riemann hypothesis for zeta functions of curves over finite fields to the curve spec $\mathbb Z$
 over the `field of characteristic one' $\mathbb{F}_1$,
which is the case of the famous Riemann hypothesis for the Riemann zeta function.

The Riemann hypothesis for the Riemann zeta function
\[\zeta(s)=
\prod_{p\in \mathcal{P}}\frac{1}{1-\frac{1}{p^s}}=
\sum_{n\in \mathbb N}\frac{1}{n^s}
\]
states that 
all complex zeros of the Riemann zeta function $\zeta(s)$ lie on the critical line 
\[
\big\{s\in \mathbb C~ |~ \Re(s)=\frac{1}{2}\big\}. 
\] 

We can now generalize Theorem \ref{Theorem 1/2}
in order to obtain the following theorem, which is the main result of this paper. 
 
\begin{theorem}\label{Main Theorem}
For each  $p\in \mathcal P$, there is a self-similar $p$-adic fractal string $\mathcal{L}_p$ of rational dimension $D=\frac{k}{m}\in [0, 1]$
and with oscillatory period $\p=\frac{2\pi}{m\log p}.$ 
\end{theorem}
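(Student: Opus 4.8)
\emph{Proof proposal.} The plan is to follow the recipe behind Theorem~\ref{Theorem 1/2} and its corollary, but now grouping the $p$-adic digits $m$ at a time rather than two at a time. Write $D=\frac{k}{m}$ with integers $0\le k\le m$; I will treat the main range $0\le k<m$ first and then comment on the endpoint $D=1$.

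First I would group the digits of $\mathbb{Z}_p$ in blocks of length $m$, so that $\mathbb{Z}_p$ appears as the disjoint union of its $p^m$ cosets $c+p^m\mathbb{Z}_p$, $c\in\{0,1,\ldots,p^m-1\}$, each of $p$-adic diameter (equivalently Haar measure) $p^{-m}$. Then I would fix any subset $S\subseteq\{0,1,\ldots,p^m-1\}$ with $|S|=p^k$ --- for instance $S=\{0,1,\ldots,p^k-1\}$, or a diagonal-type set generalizing the one used above; as will be clear, only the cardinality $|S|=p^k$ matters. The first-level intervals of $\mathcal{L}_p$ are declared to be the $p^m-p^k$ cosets $c+p^m\mathbb{Z}_p$ with $c\notin S$, and the construction is then iterated, rescaled by the factor $p^m$, inside each of the $p^k$ cosets $s+p^m\mathbb{Z}_p$, $s\in S$; this is the self-similarity, and $\mathcal{L}_p$ is fixed by the associated iterated function system of similarity contractions of $\mathbb{Z}_p$ into itself, each of ratio $|p^m|_p=p^{-m}<1$. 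By construction, at stage $n$ the string $\mathcal{L}_p$ has $\mu_n=(p^m-p^k)(p^k)^{n-1}$ intervals, each of length $l_n=p^{-mn}$; note in particular that $\sum_{n\ge 1}\mu_n l_n=1<\infty$, so $\mathcal{L}_p$ is a genuine fractal string inside $\mathbb{Z}_p$.

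Next I would sum the resulting convergent geometric series to obtain the geometric zeta function:
\[
\zeta_{\mathcal{L}_p}(s)=\sum_{n=1}^{\infty}\mu_n\,l_n^{\,s}
=(p^m-p^k)\sum_{n=1}^{\infty}\frac{(p^k)^{\,n-1}}{p^{mns}}
=\frac{p^m-p^k}{p^{ms}-p^k},\qquad \Re(s)>\frac{k}{m},
\]
and note that the right-hand side, a rational function of $p^{ms}$, is its meromorphic continuation to all of $\mathbb{C}$. Its poles are exactly the solutions of $p^{ms}=p^k$, i.e.\ $ms\log p\in k\log p+2\pi i\,\mathbb{Z}$; hence
\[
\mathcal{D}_{\mathcal{L}_p}=\Big\{\,\tfrac{k}{m}+\tfrac{2\pi i n}{m\log p}\;:\;n\in\mathbb{Z}\,\Big\}.
\]
Since $\tfrac{k}{m}$ is the abscissa of convergence of the defining series and the only real point of $\mathcal{D}_{\mathcal{L}_p}$, it is the Minkowski--Bouligand dimension $D$ of $\mathcal{L}_p$ (invoking the general theory of $p$-adic self-similar fractal strings, cf.\ \cite{LLvF2}), and the complex dimensions are equally spaced along the vertical line $\Re(s)=D$ with oscillatory period $\p=\frac{2\pi}{m\log p}$. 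One also checks, exactly as in the case $D=\tfrac12$, that $\zeta_{\mathcal{L}_p}$ has the same residue $\frac{p^{m-k}-1}{m\log p}$ at every $\omega\in\mathcal{D}_{\mathcal{L}_p}$.

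There is no genuine obstacle here --- the argument is a direct generalization of Theorem~\ref{Theorem 1/2} --- but two points call for care. The first is the identification of the abscissa of convergence $\tfrac{k}{m}$ with the Minkowski--Bouligand dimension; this is where I would appeal to the established ($p$-adic) theory of self-similar strings, namely that the Minkowski dimension equals the unique real pole of the geometric zeta function. The second is the endpoint $D=1$, i.e.\ $k=m$: there the recipe degenerates, since $p^m-p^k=0$ leaves no intervals, so this case must be handled separately (a $p$-adic string of dimension $1$ being elementary); the construction above is substantive precisely for $D=\tfrac{k}{m}\in[0,1)$.
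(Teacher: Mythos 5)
Your proposal is correct and follows essentially the same route as the paper's own proof: group the $p$-adic digits in blocks of $m$, delete all but a set $S$ of cardinality $p^k$ of the cosets $c+p^m\mathbb{Z}_p$, iterate, sum the geometric series to get $\zeta_{\mathcal{L}_p}(s)=\frac{p^m-p^k}{p^{ms}-p^k}$, and read off the complex dimensions and residues. Your added remark that the case $k=m$ (so $D=1$) is degenerate, since $p^m-p^k=0$ leaves no substrings, is a fair observation that the paper's proof passes over silently in its phrase ``for nonnegative integers $k\le m$,'' but it does not change the substance of the argument for $D\in[0,1)$.
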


\begin{proof}
For nonnegative integers $k\leq m,$
an analogous construction to the one given in Theorem~\ref{Theorem 1/2},
but now with a subset $S$ of all $m$-tuples of digits with cardinality~$p^k,$ 
creates a $p$-adic fractal string $\mathcal{L}_p$ of dimension 
$D=\frac{k}{m}$  and with oscillatory period $\p=\frac{2\pi}{m\log p}.$ 
More specifically,
 $\mathcal{L}_p$ is a self-similar $p$-adic fractal string in~$\mathbb{Z}_p$ 
that is fixed by the iterated function system of similarity contraction mappings from $\mathbb{Z}_p$ into itself:
\[
f_{a_M}(x)= a_0+ \cdots + a_{m-1}p^{m-1}+ p^mx, \mbox{ for each } a_{M}:=a_0+\cdots + a_{m-1}p^{m-1} \notin S.
\]
Thus, the self-similar string $\mathcal{L}_p$ has $(p^m-p^k)p^{nk}$ substrings of length $p^{-(n+1)m},$ for $n=0,1,2, \cdots.$
Therefore,
 its geometric zeta function is given by
\[
\zeta_{\mathcal{L}_p}(s)=\frac{p^m-p^k}{p^{ms}-p^k}=\frac{p^{m(1-D)}-1}{p^{m(s-D)}-1},  
~\mbox{for all}~ s\in \mathbb C. 
\]
Hence,
 the set of complex dimensions of $\mathcal{L}_p$ is given by
$\mathcal{D}_{\mathcal{L}_p}=\big\{D+ in\p ~|~ n \in \mathbb Z\big\}$,
where $D=\frac{k}{m}$ is the Minkowski--Bouligand dimension of $\mathcal{L}_p$ and $\p=\frac{2\pi}{\log p^m}$ is its oscillatory period. 
The residue of $\zeta_{\mathcal{L}_p}(s)$ at the rational dimension $D=\frac{k}{m}$ is 
\[\mbox{res}(\zeta_{\mathcal{L}_p}(s); \frac{k}{m})=\frac{p^{m-k}-1}{\log p^m}=\frac{p^{m(1-D)}-1}{\log p^m},\]
and this is also the residue of $\zeta_{\mathcal{L}_p}(s)$ at every other complex dimension $\omega\in \mathcal{D}_{\mathcal{L}_p}$
due to the periodicity of $\zeta_{\mathcal{L}_p}$.
\end{proof}

\begin{remark}
In the archimedean world,
 a real fractal string is defined as a bounded open subset of the real line. 
Such a set has countably many connected components, and the lengths of the string are recovered as the lengths of the components. 
In the nonarchimedean world, this definition fails. 
The $p$-adic fractal string constructed above consists of infinitely many disjoint scaled copies of $\mathbb{Z}_p$ which cover all of $\mathbb{Z}_p.$
Because of the totally disconnected nature of $\mathbb{Z}_p$, there are no boundary points where different lengths are separated from each other. 
Instead, we simply list the sequence of lengths for a $p$-adic fractal string. 
However,
 see also~\cite{LapLu2, LapLu3} where $p$-adic fractal strings,
viewed as bounded open subsets of $\mathbb{Q}_p$,
 are also defined in terms of the lengths of the $p$-adic convex components. 
\end{remark}

\begin{remark}\label{RM1}
Topologically,
 the ring of $p$-adic integers $\mathbb{Z}_p$ is zero-dimensional because it is totally disconnected, 
but here we view it as being one-dimensional;
so that every $p$-adic fractal string $\mathcal{L}_p$ contained in $\mathbb{Z}_p$ has  dimension at most 1. 
The Minkowski--Bouligand dimension of a fractal set  is not a topological invariant, but instead, 
it gives combinatorial and metric information, 
namely, the number of balls of a given size required to cover the set. 
Likewise, the complex dimensions of a fractal string contain information about how much this number oscillates. 
\end{remark}

\subsection{Ad\`elic fractal string with global complex dimension $1/2$}

According to Theorem~\ref{Theorem 1/2},
for each prime number $p\in \mathcal{P}$, there is a self-similar $p$-adic fractal string $\mathcal{L}_p$ of  dimension $D=\frac{1}{2}$.
Therefore,
 the infinite Cartesian product 
\[
 \prod_{p\in \mathcal{P}}\mathcal{L}_p
\]
is a self-similar ad\`elic fractal string $\mathcal{L}_{\frac{1}{2}}$ in the set of finite ad\`eles $\mathbb{A}_0$
because it is fixed by the iterated function system 
\[
\Phi=\{\phi_p\}_{p\in \mathcal P},
\]
where $\phi_p(x)=f_{a+bp}(x)= a+bp+p^2x, ~\mbox{for each}~ a+bp\notin S$.

Since, for each $p\in \mathcal P$, the geometric zeta function of $\mathcal{L}_p$ is 
\[
\zeta_{\mathcal{L}_p}(s)
=\frac{p-1}{p^{2(s-\frac{1}{2})}-1},
 ~\mbox{for all}~ s\in \mathbb C.
\]
However,
 the formal infinite product of meromorphic functions 
\[
 \zeta_{\mathcal{L}_{\frac{1}{2}}}(s)=
\prod_{p\in \mathcal P} \zeta_{\mathcal{L}_p}(s)=\prod_{p\in \mathcal P} \frac{p-1}{p^{2(s-\frac{1}{2})}-1} 
\]
converges only at $s=1$,
 with value given by 
 $ \zeta_{\mathcal{L}_{\frac{1}{2}}}(1)=1$.
 
We propose the following questions about the ad\`elic fractal string $\mathcal{L}_{\frac{1}{2}}$:
What is a proper definition of the geometric zeta function for $\mathcal{L}_{\frac{1}{2}}$?
What is the dimension of  $\mathcal{L}_{\frac{1}{2}}$?
And does it have complex dimensions?
Moreover, in order to study the connection with the Riemann hypothesis for the Riemann zeta function $\zeta(s)$, we also want to ask: 
   What is the spectrum of the vibrating ad\`elic fractal string $\mathcal{L}_{\frac{1}{2}}$?

\section{$p$-adic Cantor strings}

We describe a simple construction of an infinite family of $p$-adic Cantor strings $\mathcal{CS}_p$
in the nonarchimedean ring of $p$-adic integers $\mathbb{Z}_p$
and simultaneously their archimedean counterparts in the real unit interval $[0,1]$, the base-$p$ Cantor string~$\mathcal{CS}_p^*$.
The Minkowski--Bouligand dimensions of the nonarchimedean and archimedean Cantor strings vary from 0 to 1 as the prime number $p$ becomes larger. 
Directly above and below the dimension $D=D_p$ lie infinitely many complex dimensions, periodically distributed along a discrete vertical line 
\[l_D=\{\omega \in \mathcal{D}_{\mathcal{CS}_p}~|~\Re(\omega)=D\}=\mathcal{D}_{\mathcal{CS}_p}.\]
The distribution of complex dimensions on each vertical line $l_D$ is discrete near the dimension $D=0$, but becomes denser as the dimension tends to $1$. 

\begin{lemma}
For  $p=2$, there is a 2-adic fractal string $\mathcal{CS}_2$ of
 Minkowski--Bouligand dimension $D=0$ and  with oscillatory period $\p=\frac{2\pi}{\log 2}$.
 \end{lemma}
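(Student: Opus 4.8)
The plan is to realize $\mathcal{CS}_2$ as the $2$-adic analogue of Smith's general Cantor set in the degenerate case $m=2$, and then to read off its dimension and oscillatory period from the poles of its geometric zeta function. First I would decompose $\mathbb{Z}_2=(0+2\mathbb{Z}_2)\sqcup(1+2\mathbb{Z}_2)$ into its two maximal proper sub-balls, designate $1+2\mathbb{Z}_2$ as the first string component -- a single ball of length $\abs{2}_2=2^{-1}$ -- and ``exempt'' the ball $2\mathbb{Z}_2=0+2\mathbb{Z}_2$, subjecting it to the same procedure at the next scale. Inductively, at stage $n\geq 1$ the retained ball $2^{n-1}\mathbb{Z}_2$ splits as $(2^n\mathbb{Z}_2)\sqcup(2^{n-1}+2^n\mathbb{Z}_2)$; I remove the second piece as the $n$th string component, of length $2^{-n}$ and multiplicity $1$, and retain $2^n\mathbb{Z}_2$. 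This exhibits $\mathcal{CS}_2$ as a self-similar $2$-adic fractal string in $\mathbb{Z}_2$ with a single scaling ratio $r=\tfrac12$ and one gap, fixed by the one-map iterated function system generated by $x\mapsto 2x$; the part of $\mathbb{Z}_2$ never removed is $\bigcap_{n\geq 1}2^n\mathbb{Z}_2=\{0\}$, a single point.

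Next I would assemble the geometric zeta function from the sequence of lengths. Since $l_n=2^{-n}$ and $\mu_n=1$ for every $n\geq 1$, we have
\[
\zeta_{\mathcal{CS}_2}(s)=\sum_{n=1}^{\infty}\mu_n\,l_n^{\,s}=\sum_{n=1}^{\infty}2^{-ns}=\frac{2^{-s}}{1-2^{-s}}=\frac{1}{2^{s}-1},
\]
the series converging for $\Re(s)>0$ and the last expression providing the meromorphic continuation to all of $\mathbb{C}$. The poles are exactly the solutions of $2^{s}=1$, all simple with residue $1/\log 2$, so
\[
\mathcal{D}_{\mathcal{CS}_2}=\Big\{\tfrac{2\pi i n}{\log 2}~\Big|~n\in\mathbb{Z}\Big\}.
\]
Hence the Minkowski--Bouligand dimension -- equal to the abscissa of convergence of $\zeta_{\mathcal{CS}_2}$, equivalently to $\max\{\Re\omega:\omega\in\mathcal{D}_{\mathcal{CS}_2}\}$ -- is $D=0$, and the oscillatory period, i.e. the positive generator of the group of imaginary parts of the complex dimensions, is $\p=\dfrac{2\pi}{\log 2}$, as claimed.

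There is no serious analytic obstacle here; the only point worth flagging is the genuine degeneracy of the construction at $p=2$. For $p\geq 3$ the natural $p$-adic Cantor string retains $\tfrac{p+1}{2}$ of the $p$ sub-balls at each stage and therefore has positive dimension $\log_p\tfrac{p+1}{2}$, but $\tfrac{p+1}{2}$ is not an integer when $p=2$, so one is forced to retain a single sub-ball; the resulting ``Cantor set'' collapses to $\{0\}$ and the dimension drops to $0$. I would remark that this is precisely Smith's prescription -- remove all $m-1$ of the $m$ equal parts and exempt the last from subsequent division -- specialized to $m=2$, and that, despite $D=0$, the string is genuinely oscillatory: its nonreal complex dimensions $\tfrac{2\pi i n}{\log 2}$ with $n\neq 0$ are evenly spaced along the imaginary axis, which is exactly the statement that $\p=2\pi/\log 2$ is the oscillatory period.
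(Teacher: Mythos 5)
Your proof is correct and follows essentially the same route as the paper: the identical binary decomposition of $\mathbb{Z}_2$ with $1+2\mathbb{Z}_2$ retained and $2\mathbb{Z}_2$ exempted and iterated, the identical sequence of lengths $l_n=2^{-n}$ with unit multiplicity, the identical geometric zeta function $\zeta_{\mathcal{CS}_2}(s)=1/(2^s-1)$, and the identical reading-off of $D=0$ and $\p=2\pi/\log 2$ from its poles. Your additional observations -- that the surviving set $\bigcap_n 2^n\mathbb{Z}_2$ collapses to the single point $\{0\}$, and that the construction is the $m=2$ degenerate case of Smith's prescription -- match remarks the paper itself makes immediately after the lemma.
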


 \begin{proof}
 A simple way to create a 2-adic fractal string in the ring of 2-adic integers is 
to decompose $\mathbb{Z}_2$ into a disjoint union of two scaled copies of $\mathbb Z_2$ itself, 
\[\mathbb{Z}_2=2\mathbb{Z}_2\cup1+2\mathbb{Z}_2\]
and then keeping the subinterval $1+2\mathbb{Z}_2$ as the first substring of the 2-adic fractal string $\mathcal{CS}_2$.
 We iterate the above process with the remaining subinterval $2\mathbb{Z}_2,$
 and then pursue the same construction, ad infinitum, in order  
 to create the $2$-adic Cantor string
 \[\mathcal{CS}_2=1+2\mathbb{Z}_2\cup2+4\mathbb{Z}_2\cup4+8\mathbb{Z}_2\cup8+16\mathbb{Z}_2\cup\cdots. \]
 Hence, the geometric zeta function $\zeta_{\mathcal{CS}_2}$  of $\mathcal{CS}_2$ equals the meromorphic continuation of the following convergent geometric series to the entire complex plane 
 $\mathbb C:$
\[
 \frac{1}{2^s}+\frac{1}{4^s}+\frac{1}{8^s}+\frac{1}{16^s}\ldots=\frac{1}{2^s-1},\ \mbox{ for } \Re(s)>0;
\]
 that is to say,
\[
 \zeta_{\mathcal{CS}_2}(s)=\frac{1}{2^s-1},\ \mbox{ for all }s\in \mathbb C.
\]
Therefore, the set of complex dimensions for $\mathcal{CS}_2$ is obtained by solving the  equation $2^s-1=0$.
 The residue of $\zeta_{\mathcal{CS}_2}(s)$ at the least-fractal dimension $D=0$ is given by
\[
 \mbox{res}(\zeta_{\mathcal{CS}_2}; 0)=\frac{1}{\log 2},
\]
 and this is also the residue of $\zeta_{\mathcal{CS}_2}$ at every other complex dimension $\omega \in \mathcal{D}_{\mathcal{CS}_2}$
 by the periodicity of $\zeta_{\mathcal{CS}_2}$.
 \end{proof}
 
 \begin{remark}
 The 2-adic Cantor string $\mathcal{CS}_2$ is the topological complement of the 2-adic Cantor set $\mathcal{C}_2$ in the ring of 2-adic integers $\mathbb{Z}_2$, see \S\ref{SSC}. 
The 2-adic Cantor set $\mathcal{C}_2$ is the unique nonempty compact subset of $\mathbb{Z}_2$ that is invariant under the transformation of the iterated function system $\Phi=\{\phi\}$: 
 $\mathcal{C}_2=\phi(\mathcal{C}_2)$, 
 where $\phi(x)=2x$ is a similarity contraction mapping of $\mathbb{Z}_2$ into itself. 
 So, technically, $\mathcal{C}_2$ is not self-similar because the iterated function system $\Phi=\{\phi\}$ has only one map \cite{Fal}. 
 \end{remark}
 
 \begin{theorem}
 For each $p>2$, there is a self-similar $p$-adic fractal string $\mathcal{CS}_p$ of 
 Minkowski--Bouligand dimension $D=\frac{\log(\frac{1+p}{2})}{\log p}$
 and with oscillatory period $\p=\frac{2 \pi}{\log p}$.
  \end{theorem}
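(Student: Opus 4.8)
The plan is to mimic the preceding Lemma (the case $p=2$) and the constructions of Theorems~\ref{Theorem 1/2} and~\ref{Main Theorem}, the only new ingredient being the choice of how many first-level pieces of $\mathbb{Z}_p$ to retain. Since $p>2$ is an odd prime, $\frac{p+1}{2}$ is an integer with $2\le\frac{p+1}{2}<p$; this is precisely what permits the construction for $p>2$ and forces the degenerate case $p=2$ to be handled separately, as was done above. Concretely, I would decompose $\mathbb{Z}_p=\bigcup_{a=0}^{p-1}(a+p\mathbb{Z}_p)$ into its $p$ residue discs and fix a subset $S\subset\{0,1,\dots,p-1\}$ of cardinality $\frac{p+1}{2}$ (for instance the even digits, the $p$-adic analogue of ``removing the middle part''; for $p=3$ this keeps the digits $\{0,2\}$ and recovers the $3$-adic ternary Cantor set). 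The $\frac{p-1}{2}$ discs $a+p\mathbb{Z}_p$ with $a\notin S$ become the first-level substrings of $\mathcal{CS}_p$, each of length $p^{-1}$, and the same procedure is iterated inside every retained disc $a+p\mathbb{Z}_p$, $a\in S$. The string $\mathcal{CS}_p$ so obtained is fixed by the iterated function system $\Phi=\{\,f_a(x)=a+px : a\in S\,\}$ of $\frac{p+1}{2}\ge 2$ similarity contractions of $\mathbb{Z}_p$ into itself — hence it is genuinely self-similar — and equals $\mathbb{Z}_p\setminus\mathcal{C}_p$, where the $p$-adic Cantor set $\mathcal{C}_p$ is the compact attractor of $\Phi$.

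Next I would count lengths: an easy induction on the level $n$ gives $\mu_n=\frac{p-1}{2}\left(\frac{p+1}{2}\right)^{n-1}$ substrings of length $l_n=p^{-n}$ for each $n\ge 1$, since each of the $\left(\frac{p+1}{2}\right)^{n-1}$ discs retained at level $n-1$ contributes $\frac{p-1}{2}$ new gaps at level $n$. Summing the resulting convergent geometric series shows that the geometric zeta function of $\mathcal{CS}_p$ is the meromorphic continuation to $\mathbb{C}$ of
\[
\sum_{n=1}^{\infty}\mu_n\,l_n^{\,s}
=\sum_{n=1}^{\infty}\frac{p-1}{2}\left(\frac{p+1}{2}\right)^{n-1}p^{-ns}
=\frac{p-1}{2p^{s}-(p+1)},\qquad \Re(s)>\frac{\log\frac{p+1}{2}}{\log p},
\]
so that $\zeta_{\mathcal{CS}_p}(s)=\frac{p-1}{2p^{s}-(p+1)}$ for all $s\in\mathbb{C}$, exactly as in the earlier proofs.

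Finally, the complex dimensions of $\mathcal{CS}_p$ are the poles of $\zeta_{\mathcal{CS}_p}$, i.e., the solutions of $p^{s}=\frac{p+1}{2}$, giving $\mathcal{D}_{\mathcal{CS}_p}=\{\,D+in\p : n\in\mathbb{Z}\,\}$ with $D=\frac{\log\frac{p+1}{2}}{\log p}$ and $\p=\frac{2\pi}{\log p}$; here $D$ is the abscissa of convergence found above, hence the Minkowski--Bouligand dimension of $\mathcal{CS}_p$, and $0<D<1$ because $1<\frac{p+1}{2}<p$ for $p>2$. A one-line residue computation yields $\mbox{res}(\zeta_{\mathcal{CS}_p};D)=\frac{p-1}{(p+1)\log p}$, which by periodicity of $\zeta_{\mathcal{CS}_p}$ is also the residue at every $\omega\in\mathcal{D}_{\mathcal{CS}_p}$. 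I do not expect a genuine obstacle here: the entire argument reduces to the same bookkeeping as in Theorem~\ref{Theorem 1/2} and the Lemma, the one real decision being the count $\frac{p+1}{2}$ imposed by the target dimension, and the only point deserving emphasis is the integrality of $\frac{p+1}{2}$ (equivalently, the oddness of $p$) noted at the start.
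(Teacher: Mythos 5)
Your proposal is correct and follows essentially the same route as the paper: decompose $\mathbb{Z}_p$ into its $p$ residue discs, retain $\frac{p+1}{2}$ of them (the even digits) and remove $\frac{p-1}{2}$, iterate, obtain $\mu_n=\frac{p-1}{2}\bigl(\frac{p+1}{2}\bigr)^{n-1}$ lengths $l_n=p^{-n}$, and sum the geometric series to get $\zeta_{\mathcal{CS}_p}(s)=\frac{p-1}{2p^s-p-1}$, from which the dimension $D=\frac{\log\frac{p+1}{2}}{\log p}$, the period $\mathbf{p}=\frac{2\pi}{\log p}$, and the residue $\frac{p-1}{(p+1)\log p}$ follow exactly as in the paper. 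The only cosmetic difference is that you phrase the retained discs as an arbitrary $S$ of cardinality $\frac{p+1}{2}$ with the even digits as the canonical example, whereas the paper works with the even/odd split directly; this does not change the argument.
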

 
 \begin{proof}
 Reminiscent of Smith's construction of the general Cantor set, 
 we create a $p$-adic fractal string in the ring of $p$-adic integers $\mathbb Z_p$ by decomposing $\mathbb{Z}_p$ into the disjoint union
 \[p\mathbb{Z}_p\cup 1+p\mathbb{Z}_p \cup \ldots \cup (p-1)+p\mathbb{Z}_p\]
 and then keeping the subintervals \[1+p\mathbb{Z}_p, 3+p\mathbb{Z}_p, \ldots, (p-2)+p\mathbb{Z}_p\]
 as the first generation of substrings of the $p$-adic fractal string $\mathcal{CS}_p$. 
 By iterating this process with the remaining subintervals \[p\mathbb{Z}_p, 2+p\mathbb{Z}_p, \ldots, (p-1)+p\mathbb{Z}_p,\]
 we obtain the $p$-adic Cantor string
 \[
 \mathcal{CS}_p= 
\bigcup_{j=1}^{\frac{p-1}{2}}2j-1+p\mathbb{Z}_p\cup \bigcup_{j=1}^{\frac{p-1}{2}}2j-1+p^2\mathbb{Z}_p\cup \cdots. 
 \]
 Thus,
 we get
    $\mu_n= (\frac{p-1}{2})(\frac{p+1}{2})^{n-1}$ subintervals of length $l_n=p^{-n},$ for each positive integer $n$. 
     Hence,
 the geometric zeta function $\zeta_{\mathcal{CS}_p}(s)$ of the $p$-adic Cantor string~$\mathcal{CS}_p$ 
    coincides with the meromorphic continuation to the whole complex plane $\mathbb C$
    of the following convergent geometric series:
    \[
    \sum_{n=1}^{\infty}\mu_n\cdot l_n^s=
    \frac{p-1}{p+1}\sum_{n=1}^{\infty} \Big(\frac{1+p}{2p^s}\Big)^n=\frac{p-1}{2p^s-p-1},
~ \mbox{for}~ \Re(s)>\frac{\log(\frac{1+p}{2})}{\log p};\]
that is to say,
\begin{equation}\label{zeta}
\zeta_{\mathcal{CS}_p}(s)= \frac{p-1}{2p^s-p-1},~\mbox{for all} ~ s\in \mathbb{C}.
\end{equation}
Therefore, the set of complex dimensions of the $p$-adic Cantor string $\mathcal{CS}_p$ is given by
\[
\mathcal{D}_{\mathcal{CS}_p}=\{D+in\p ~|~ n\in \mathbb{Z}\},
\]
where 
$D=\frac{\log(\frac{1+p}{2})}{\log p}$ is the Minkowski--Bouligand dimension of $\mathcal{CS}_p$ and $\p=\frac{2\pi}{\log p}$ is its oscillatory period. 
The residue of  $\zeta_{\mathcal{CS}_p}$ at the fractal dimension $D=\frac{\log(\frac{1+p}{2})}{\log p}$ is given by
\[\mbox{res}(\zeta_{\mathcal{CS}_p}(s); D)=\frac{p-1}{(p+1)\log p}\]
and
this is also the residue of $\zeta_{\mathcal{CS}_p}$ at every other complex dimension $\omega \in \mathcal{D}_{\mathcal{CS}_p}$,
 by the periodicity of $\zeta_{\mathcal{CS}_p}$;
see (\ref{zeta}) above.
 \end{proof}
 
 \begin{remark}
 Since the limit of $D=\frac{\log(\frac{1+p}{2})}{\log p}$ is 1
 and the limit of $\p=\frac{2 \pi}{\log p}$ is 0,
 as~$p\rightarrow \infty,$
 the periodic distribution of complex dimensions on the vertical line $l_D$ is discrete near the dimension $D=0,$ but becomes denser as the dimension $D$ tends to~1,
as~$p\to\infty$.
 \end{remark}
 
 \begin{remark}
  We recover the 3-adic Cantor string in \cite{LapLu1} when we choose~\mbox{$p=3$}.
 When $p=5$,
 we recover the 5-adic Cantor string in \cite{KRC} (building on~\cite{LapLu1}),
which motivated our construction of the $p$-adic Cantor strings as well as of their archimedean counterparts in \S\ref{SSC}.
 \end{remark}
 
 \subsection{Geometric waves}
 Nonreal complex dimensions are the source of oscillations in the geometry of a fractal string. 
 The real parts of the complex dimensions correspond to the amplitudes of geometric waves propagating through the `space of scales' 
 that lies beneath the  surface of a fractal string
 and the imaginary parts of the complex dimensions correspond to the frequencies of those geometric waves. 
 The lengths out of which a fractal string is composed of can be thought of as being the underlying scales of the system; see \cite{Lap5}.
 
For each complex dimension $\omega=D+in\p\in \mathcal{D}_{\mathcal{CS}_p}$, 
the residue of  $\zeta_{\mathcal{CS}_p}$ at $\omega$ is independent of $n\in \mathbb Z$ and is given by
$\mbox{res}(\zeta_{\mathcal{CS}_p}; \omega)=\frac{p-1}{(p+1)\log p}$.
Therefore, for any sufficiently small $\varepsilon>0$, 
the volume of the inner $\varepsilon$-neighborhoods of the $p$-adic Cantor string $\mathcal{CS}_p$ is given by the explicit fractal tube formula
\[
V_{\mathcal{CS}_p}(\varepsilon)=
\frac{p-1}{(p+p^2)\log p}\times \varepsilon^{1-D}
\sum_{n\in\mathbb Z}
\frac{\cos(n\p \log \varepsilon)-i\sin(n\p \log \varepsilon)}{1-D-in\textbf{p}},
\]
which follows from the exact fractal tube formula for any self-similar $p$-adic fractal string $\mathcal{L}_p$ with simple complex dimensions (see \cite{LLvF1, LLvF2}):
\[
V_{\mathcal{L}_p}(\varepsilon)=
\sum_{\omega \in \mathcal{D}_{\mathcal{L}_p}}
\frac{\mbox{res}(\zeta_{\mathcal{L}_p}; ~\omega)}{p}\times \frac{\varepsilon^{1-\omega}}{1-\omega}.
\]
Consequently, every $p$-adic Cantor string has logarithmic oscillations of order $D$ in its geometry because the limit of
$\frac{V_{\mathcal{CS}_p} (\varepsilon)}{\varepsilon^{1-D}}$
does not exist in $(0, +\infty),$ as $\varepsilon \rightarrow 0^+$.
Therefore, $p$-adic Cantor strings are not Minkowski measurable and hence, their Minkowski contents do not exist.
However,
the average Minkowski content,
 defined as a Ces\`aro logarithmic average,
 exists for each $p$-adic Cantor string $\mathcal{CS}_p$
and is given by
\[
\mathcal{M}_{av}(\mathcal{CS}_p)=
\frac{p-1}{(p+p^2)\log(\frac{2p}{p+1})},
\]
while $\mathcal{M}_{av}(\mathcal{L}_p)$ is defined by the formula
\[
\mathcal{M}_{av}(\mathcal{L}_p):=\lim_{T\rightarrow \infty}\frac{1}{\log T}\int_{T^{-1}}^1\frac{V_{\mathcal L_p}(\varepsilon)}{\varepsilon^{1-D}} 
\frac{d\varepsilon}{\varepsilon}=
\frac{1}{p(1-D)}\mbox{res}(\zeta_{\mathcal{L}_p}; D),
\]
which is valid for any self-similar $p$-adic fractal string $\mathcal{L}_p$ (see \cite{LLvF1}).

\subsubsection{$V_{\mathcal{L}_p}(\varepsilon)$ associated with a geometric wave}

For each prime  $p\in \mathcal P$ and a sufficiently small $\varepsilon >0$, 
the volume $V_{\mathcal{L}_p}(\varepsilon)$ of the inner $\varepsilon$-neighborhoods of the $p$-adic fractal string $\mathcal{L}_p$ of rational dimension $D=\frac{k}{m}$ and with oscillatory period $\p=\frac{2\pi}{m\log p}$
 is given by the following exact
 explicit fractal tube formula (see \cite{LLvF1, LLvF2}),
\[
V_{\mathcal{L}_p}(\varepsilon)=
\frac{p^{m(1-D)}-1}{p\log {p^m}}\times \varepsilon^{1-D}
\sum_{n\in\mathbb Z}
\frac{\cos(n\p \log \varepsilon)-i\sin(n\p \log \varepsilon)}{1-D-in\textbf{p}},
\]
since the residue of $\zeta_{\mathcal{L}_p}$ 
at every complex dimension $\omega=D+in\p \in \mathcal{D}_{\mathcal{L}_p}$ is given by 
\[\mbox{res}(\zeta_{\mathcal{L}_p}(s); \omega)=\frac{p^{m(1-D)}-1}{\log {p^m}}.\]

Consequently,
the $p$-adic fractal string $\mathcal{L}_p$ is not Minkowski measurable.
However,
its average Minkowski content exists and is given by
\[
\mathcal{M}_{av}(\mathcal{L}_p)=
\frac{p^{m(1-D)}-1}{p \log p^{m(1-D)}}.
\]

The notion of Minkowski content is a generalization of the notion of volume of a smooth manifold in $N$-dimensional Euclidean space to an arbitrary 
 bounded subset of $\mathbb{R}^N$
of any fractal dimension $D\in [0, N]$.
Intuitively, the $D$-dimensional Minkowski content of a fractal set of Minkowski--Bouligand dimension~$D$ is its $D$-dimensional fractal volume.
Motivated in parts by some of the main results of \cite{LapPo1} and \cite{Lap3.5},
Alain Connes showed
that the Minkowski content is a natural analogue of the volume of a compact smooth Riemannian spin manifold for a fractal set
in the case of the ordinary Cantor set or string \cite{Connes}. 
Benoit Mandelbrot proposed the Minkowski content as a measure of fractal lacunarity  
because the value of the Minkowski content allows one to compare the lacunarity of fractal sets of the same Minkowski--Bouligand dimension \cite{Mandelbrot}.
(See also \cite{KK,LapPo1,LvF1,LvF2,LvF3,LRZ}.)

On the other hand,
 Minkowski measurability is a kind of `fractal regularity' for the underlying geometry. 
The existence of nonreal complex dimensions of a fractal string $\mathcal L$, 
along with the simplicity of the  dimension $D$ as pole of its geometric zeta function $\zeta_{\mathcal{L}}(s)$,
 determines the Minkowski nonmeasurability of  $\mathcal L$ \cite{LvF1, LvF2, LvF3};
 this result has been extended to higher-dimensional fractals in \cite{LRZ}.

\subsection{$p$-adic Cantor sets and their archimedean counterparts}\label{SSC}

Fix a prime~$p$.
The $p$-adic Cantor string $\mathcal{CS}_p$ is the set-theoretic complement of the self-similar $p$-adic Cantor set $\mathcal{C}_p$ in the ring of $p$-adic integers $\mathbb{Z}_p$. 
The archimedean counterpart in the archimedean field of real numbers $\mathbb R$ of the nonarchimedean $p$-adic Cantor set $\mathcal{C}_p$ is the base-$p$ Cantor set $\mathcal{C}_p^*$.
The complement of the base-$p$ Cantor set $\mathcal{C}_p^*$  in the real unit interval $[0, 1]$ is the base-$p$ Cantor string $\mathcal{CS}_p^*$.

\begin{lemma}
 For each prime\/ $p>2,$
the\/ $p$-adic Cantor set\/ $\mathcal{C}_p$ is a self-similar set in\/ $\mathbb{Z}_p;$
that is to say,\/
 $\mathcal{C}_p$ is the unique nonempty invariant compact set of an iterated function system of\/ $(p+1)/2$ similarity contraction mappings from\/ $\mathbb{Z}_p$ into itself\/{\rm:}
\[
\mathcal{C}_p=\Phi_p(\mathcal{C}_p)=\bigcup_{i=1}^{\frac{p+1}{2}}\phi_i(\mathcal{C}_p),
\]
where  
$\Phi_p=\{\phi_1, \phi_2, \ldots, \phi_{\frac{p+1}{2}}\}= \{0+px, 2+px, \ldots, p-1+px\}.$ 
\end{lemma}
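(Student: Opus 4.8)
The plan is to identify $\mathcal{C}_p$ with the attractor of the iterated function system $\Phi_p$ by exhibiting it concretely as the complement in $\mathbb{Z}_p$ of the $p$-adic Cantor string $\mathcal{CS}_p$ constructed above, and then to invoke the standard fixed-point theory for iterated function systems of similarity contractions on a complete metric space. First I would recall that $(\mathbb{Z}_p, |\cdot|_p)$ is a complete metric space, indeed a compact ultrametric space, and that each map $\phi_i(x) = a_i + px$ with $a_i \in \{0, 2, 4, \ldots, p-1\}$ (the $(p+1)/2$ even residues, together with $0$) is a similarity contraction with ratio $|p|_p = 1/p < 1$. Hutchinson's theorem then guarantees that there is a unique nonempty compact set $K \subseteq \mathbb{Z}_p$ with $K = \Phi_p(K) = \bigcup_{i=1}^{(p+1)/2} \phi_i(K)$, obtained as the limit in the Hausdorff metric of the iterates $\Phi_p^n(\mathbb{Z}_p)$. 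It remains only to verify that $K = \mathcal{C}_p$, i.e., that this attractor coincides with the set-theoretic complement of $\mathcal{CS}_p$ in $\mathbb{Z}_p$.

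For the identification, I would argue at the level of digit expansions. An element $y = \sum_{j \geq 0} c_j p^j \in \mathbb{Z}_p$ lies in $\Phi_p^n(\mathbb{Z}_p)$ precisely when its first $n$ digits $c_0, c_1, \ldots, c_{n-1}$ each belong to the digit set $\{0, 2, 4, \ldots, p-1\}$; this follows by an easy induction, since applying $\phi_i(x) = a_i + px$ prepends the digit $a_i$ and shifts the remaining expansion. Hence the attractor $K = \bigcap_{n \geq 1} \Phi_p^n(\mathbb{Z}_p)$ is exactly the set of $p$-adic integers all of whose digits lie in $\{0, 2, 4, \ldots, p-1\}$. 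On the other hand, the construction of $\mathcal{CS}_p$ in the preceding theorem removes, at stage $n$, precisely the subintervals $2j-1 + p^n\mathbb{Z}_p$ for $j = 1, \ldots, (p-1)/2$, i.e., all $y$ whose $(n-1)$-st digit $c_{n-1}$ is odd (given that the earlier digits survived). Therefore the complement $\mathbb{Z}_p \setminus \mathcal{CS}_p$ consists exactly of those $y$ with every digit even (or zero), which is the same set $K$. This establishes $\mathcal{C}_p = K = \Phi_p(\mathcal{C}_p)$, and uniqueness and compactness come for free from Hutchinson's theorem.

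The main obstacle, such as it is, is bookkeeping rather than depth: one must be careful that the digit set $\{0, 2, \ldots, p-1\}$ has the right cardinality $(p+1)/2$ (for odd $p$ the even residues modulo $p$ are $0, 2, \ldots, p-1$, which number $(p+1)/2$, so the count matches the number of surviving subintervals at each stage, namely $p$ total minus $(p-1)/2$ removed), and that the self-similarity really is genuine in the sense of \cite{Fal}, i.e., the IFS has more than one map when $p > 2$, in contrast with the degenerate case $p = 2$ noted in the earlier remark. A secondary point worth spelling out is that the ultrametric inequality makes the images $\phi_i(\mathbb{Z}_p) = a_i + p\mathbb{Z}_p$ pairwise disjoint, so the union defining $\mathcal{C}_p$ is in fact a disjoint union and the open set condition for self-similar sets holds trivially; this guarantees that the similarity dimension computed from $\sum_i (1/p)^s = 1$, namely $s = \log((p+1)/2)/\log p$, is the genuine Minkowski--Bouligand dimension of $\mathcal{C}_p$, consistent with the dimension of $\mathcal{CS}_p$ found above. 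I expect the entire argument to occupy at most a short paragraph once the digit characterization of $\mathcal{CS}_p$ from the previous proof is in hand.
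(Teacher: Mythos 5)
Your proof is correct. Where it differs from the paper is that the paper dispenses with the lemma in a single line, citing Part~(i) of Theorem~4.2 in \cite{LapLu2}, whereas you supply a self-contained argument: compactness and completeness of $(\mathbb{Z}_p,|\cdot|_p)$, the observation that each $\phi_i(x)=a_i+px$ with $a_i$ even is a similarity with ratio $|p|_p=1/p$, Hutchinson's fixed-point theorem to obtain the unique attractor $K=\bigcap_{n\geq 1}\Phi_p^n(\mathbb{Z}_p)$, and the digit-expansion characterization identifying $K$ with the set of $p$-adic integers having only even digits, which by the construction of $\mathcal{CS}_p$ (and the companion lemma in the paper) is precisely $\mathcal{C}_p=\mathbb{Z}_p\setminus\mathcal{CS}_p$. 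This is almost certainly the argument underlying the cited theorem, so the mathematical content is the same; what you buy by writing it out is that the present paper becomes self-contained and the reader sees concretely how the nonarchimedean digit structure makes the IFS images $a_i+p\mathbb{Z}_p$ pairwise disjoint, which is a pleasant feature absent in the archimedean setting. Two small points of polish: you should be explicit that the induction step ``$y\in\Phi_p^n(\mathbb{Z}_p)$ iff $c_0,\ldots,c_{n-1}$ are all even'' uses that $\Phi_p(\mathbb{Z}_p)\subseteq\mathbb{Z}_p$ so the iterates form a decreasing chain, and the closing aside about the open set condition and similarity dimension, while true, proves more than the lemma asks and could be moved to a remark.
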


\begin{proof}
 This follows from Part (i) of Theorem 4.2 in \cite{LapLu2}.
\end{proof}
 
\begin{lemma}
For each\/ $p>2,$
 the $p$-adic Cantor set\/ $\mathcal{C}_p$ is a subset of the\/ $p$-adic integers\/ $\mathbb{Z}_p$ whose elements
only contain even digits in their\/ $p$-adic expansions\/{\rm:}
\[
\mathcal{C}_p=\{a_0 + a_1p+a_2 p^2 + a_3p^3+\cdots \in \mathbb{Z}_p ~|~ a_i \in \{0, 2, 4, \ldots, p-1 \},\mbox{ \rm for all } i\geq 0 \}.
\]
\end{lemma}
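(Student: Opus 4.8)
The plan is to identify the right-hand side set, call it $E$, with the attractor $\mathcal{C}_p$. I would do this by realizing $\mathcal{C}_p$ as a nested intersection of ``generations'' and then computing that intersection digit by digit, which makes the ``only even digits'' description appear directly. (A shorter route, via the uniqueness clause of the preceding lemma, is sketched at the end.)

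First, I would recall the standard picture. The iterated function system here consists of the maps $\phi_c(x)=c+px$ as $c$ runs over the even digits $\{0,2,4,\dots,p-1\}$; each $\phi_c$ sends $\mathbb{Z}_p$ into $\mathbb{Z}_p$, since $\phi_c(\mathbb{Z}_p)=c+p\mathbb{Z}_p\subseteq\mathbb{Z}_p$, and contracts distances by the factor $|p|_p=p^{-1}<1$. By the fixed-point principle for contractive iterated function systems on the complete ultrametric space $\mathbb{Z}_p$ (the same principle underlying the preceding lemma), the unique nonempty invariant compact set is the decreasing intersection
\[
\mathcal{C}_p=\bigcap_{n\ge0}\Phi_p^{\,n}(\mathbb{Z}_p),
\]
with the convention $\Phi_p^{\,0}(\mathbb{Z}_p)=\mathbb{Z}_p$.

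Second, I would prove by induction on $n$ that $\Phi_p^{\,n}(\mathbb{Z}_p)$ is exactly the set of $x=\sum_{i\ge0}a_ip^i\in\mathbb{Z}_p$ whose first $n$ digits $a_0,\dots,a_{n-1}$ are all even. The case $n=0$ is vacuous. For the inductive step, every element of $\Phi_p^{\,n+1}(\mathbb{Z}_p)=\bigcup_c\bigl(c+p\,\Phi_p^{\,n}(\mathbb{Z}_p)\bigr)$ — the union being over even digits $c$ — has the form $c+py$ with $c$ even and $y=\sum_{i\ge0}a_ip^i$ satisfying $a_0,\dots,a_{n-1}$ even; since $p$-adic expansions in $\mathbb{Z}_p$ are unique, $c+py$ has digit string $c,a_0,a_1,\dots$, so its first $n+1$ digits are even. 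Conversely, if $x=\sum_{i\ge0}b_ip^i$ has $b_0,\dots,b_n$ even, then $x=\phi_{b_0}\!\bigl(\sum_{i\ge0}b_{i+1}p^i\bigr)$, and the argument lies in $\Phi_p^{\,n}(\mathbb{Z}_p)$ by the induction hypothesis, so $x\in\Phi_p^{\,n+1}(\mathbb{Z}_p)$. Intersecting over all $n$ then gives $\mathcal{C}_p=\{x\in\mathbb{Z}_p:a_i\text{ even for all }i\ge0\}=E$, as claimed.

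I do not anticipate a real obstacle: the heart of the argument is simply tracking the digit shift produced by multiplication by $p$. The one place to be careful is that this bookkeeping is clean precisely because $\mathbb{Z}_p$-expansions are unique — there is no analogue of the endpoint ambiguity ($0.0\overline{p-1}=0.1$) that complicates base-$p$ expansions of real numbers, and this deserves a remark. A second point meriting one precise sentence is the justification that $\mathcal{C}_p=\bigcap_n\Phi_p^{\,n}(\mathbb{Z}_p)$: this is the standard fact that for a contractive iterated function system the attractor equals $\bigcap_n\Phi^{\,n}(K)$ for any nonempty compact $K$ with $\Phi(K)\subseteq K$, applied with $K=\mathbb{Z}_p$. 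If one prefers to bypass the intersection formula, one can instead verify directly that $E$ is closed in $\mathbb{Z}_p$ (its complement is open, since if $a_i(x)$ is odd then the whole ball $x+p^{\,i+1}\mathbb{Z}_p$ shares that digit and avoids $E$), hence compact, that $0\in E$, and that $\Phi_p(E)=E$ by the same digit-shift computation; the uniqueness clause of the preceding lemma then forces $\mathcal{C}_p=E$.
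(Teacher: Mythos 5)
Your proof is correct and follows the same underlying idea as the paper's proof---namely that at each stage of the construction every odd digit is removed, so the attractor consists of exactly the $p$-adic integers with all-even digits. The paper states this in a single sentence and defers to \cite{LapLu1, KRC} for details, whereas you supply the full justification: the identification $\mathcal{C}_p=\bigcap_{n\ge 0}\Phi_p^{\,n}(\mathbb{Z}_p)$, the inductive digit-shift bookkeeping (which crucially uses the uniqueness of $p$-adic expansions, as you rightly flag), and, as a useful alternative, the direct verification that the all-even-digit set is compact, nonempty and $\Phi_p$-invariant together with the uniqueness clause of the preceding lemma.
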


\begin{proof}
 Since we remove all the odd digits at every stage in the construction of the $p$-adic Cantor set $\mathcal{C}_p$,
 the elements of $\mathcal{C}_p$ must consist only of even digits in their $p$-adic expansions;
see \cite{LapLu1, KRC}.
\end{proof}
 
 Let $\mathcal{C}_p^*$ be a subset of the unit interval $[0,1]$ on the real line whose elements only contain even digits in their base-$p$ expansion:
  \[ \mathcal{C}_p^*:=\{a_0 + a_1p^{-1}+a_2 p^{-2} + a_3p^{-3}+\cdots  ~|~ a_i \in \{0, 2, 4, \ldots, p-1 \}, ~\mbox{for all} ~ i\geq 0 \}.\]
 Then, the base-$p$ Cantor set $\mathcal{C}_p^*$ is homeomorphic to the $p$-adic Cantor set $\mathcal{C}_p$ via the continuous map
 \[
 \sum_{i=0}^{\infty}a_i\cdot p^{-i} \longmapsto  \sum_{i=0}^{\infty}a_i\cdot p^i
 \]
 that sends each element of the compact set 
  $\mathcal{C}_p^*$ in the complete metric space $(\mathbb R, |\cdot|_{\infty})$  to an element of the compact set $\mathcal{C}_p$ in the complete ultrametric space $(\mathbb{Q}_p, |\cdot|_p)$.
 Therefore, the  base-$p$ Cantor set $\mathcal{C}_p^*$ can be considered as a natural archimedean counterpart 
  of the nonarchimedean $p$-adic Cantor set $\mathcal{C}_p$.
    
 The complement of the base-$p$ Cantor set $\mathcal{C}_p^*$ in the real unit interval $[0,1]$ is an archimedean fractal string  $\mathcal{CS}_p^*$, called the base-$p$ Cantor string.
 Hence, the base-$p$ Cantor string $\mathcal{CS}_p^*$ is the subset of the unit interval $[0,1]$ whose elements must contain at least one odd digit in their base-$p$ expansions. 
In the same way,
 the complement of the $p$-adic Cantor set $\mathcal{C}_p$ in the ring of $p$-adic integers $\mathbb{Z}_p$ is the $p$-adic Cantor string $\mathcal{CS}_p$.
Therefore,
 every element of the $p$-adic Cantor string $\mathcal{CS}_p$ contains at least one odd digit in its $p$-adic expansion. 

The base-$p$ Cantor string $\mathcal{CS}_p^*$ and  the $p$-adic Cantor string $\mathcal{CS}_p$ both have the same sequence of lengths with respect to each respective metric.
Hence,
the base-$p$ Cantor string $\mathcal{CS}_p^*$ can be considered as a natural archimedean counterpart of the nonarchimedean $p$-adic Cantor string $\mathcal{CS}_p$.
Therefore,
 we consider the cartesian product $\mathcal{CS}_p^*\times\mathcal{CS}_p$ as a natural 
symmetric fractal string in the archimedean--nonarchimedean space $\mathbb R\times \mathbb{Q}_p$.

\subsection{Ad\`elic Cantor string $\mathcal{CS}_{\mathbb{A}_0}$}

For each $p\in \mathcal P$, let $\mathcal{CS}_p$ be the $p$-adic Cantor string. 
Then, an infinite cartesian product of every $p$-adic Cantor string is an ad\`elic fractal string $\mathcal{CS}_{\mathbb{A}_0}$
in the set of finite ad\`eles $\mathbb{A}_0$:
\[
\mathcal{CS}_{\mathbb {A}_0}=\prod_{p\in \mathcal P}\mathcal{CS}_p.
\]

\begin{theorem}
The ad\`elic Cantor string\/ $\mathcal{CS}_{\mathbb{A}_0}$ is a self-similar string in\/ $\mathbb{A}_0$.
\end{theorem}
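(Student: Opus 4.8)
The plan is to transport the self-similarity of the individual factors to the restricted infinite product $\mathcal{CS}_{\mathbb{A}_0}=\prod_{p\in\mathcal P}\mathcal{CS}_p$, exactly as was done earlier in this section for the adèlic string $\mathcal{L}_{\frac{1}{2}}$. By the Lemma (case $p=2$) and the preceding Theorem (case $p>2$), each $p$-adic Cantor string is fixed by the iterated function system
\[
\Phi_p=\{\,x\longmapsto a+px\ :\ a\in\{0,2,4,\dots\},\ 0\le a\le p-1\,\}
\]
(for $p=2$ this is the single contraction $x\mapsto 2x$, so that $\mathcal{CS}_2$ is, strictly speaking, not self-similar on its own --- but this defect will disappear in the product, whose iterated function system has infinitely many maps), and the construction of $\mathcal{CS}_p$ yields the disjoint decomposition
\[
\mathcal{CS}_p=\bigsqcup_{a\ \mathrm{odd}}\big(a+p\mathbb{Z}_p\big)\ \sqcup\ \bigsqcup_{a\ \mathrm{even}}\big(a+p\,\mathcal{CS}_p\big),
\]
whose first term records the $(p-1)/2$ first substrings and whose second term is $\Phi_p(\mathcal{CS}_p)$.

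First I would check that $\mathcal{CS}_{\mathbb{A}_0}$ genuinely lies in $\mathbb{A}_0$: since $\mathcal{CS}_p\subseteq\mathbb{Z}_p$ for every prime $p$, the product is contained in the compact ring $\widehat{\mathbb{Z}}=\prod_p\mathbb{Z}_p\subseteq\mathbb{A}_0$, so each of its points has integral components at all (in particular, at almost all) places, and the lengths of its substrings are the normalized Haar measures of the basic adèlic neighborhoods $\prod_p p^{r_p}\mathbb{Z}_p$ that occur in its construction, as described in \S2.

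Next I would assemble the adèlic iterated function system $\Phi_{\mathbb{A}_0}=\{\phi_q\}_{q\in\mathcal P}$ by letting, for each prime $q$, the corresponding family of maps $\phi_q$ act on a point $x=(x_p)_p$ through $\Phi_q$ in the $q$-th coordinate and through the identity in every other coordinate; each resulting map is a coordinatewise similarity contraction of $\mathbb{A}_0$ (scaling the $q$-th coordinate by $1/q$ and fixing the rest, hence contracting with uniform ratio at most $\frac{1}{2}$ in the adèlic metric). Taking the Cartesian product of the $p$-adic decompositions above and rearranging then yields a self-referential identity of the form
\[
\mathcal{CS}_{\mathbb{A}_0}=\big(\text{first substrings of }\mathcal{CS}_{\mathbb{A}_0}\big)\ \sqcup\ \Phi_{\mathbb{A}_0}\big(\mathcal{CS}_{\mathbb{A}_0}\big),
\]
which exhibits $\mathcal{CS}_{\mathbb{A}_0}$ as fixed by $\Phi_{\mathbb{A}_0}$ and therefore as a self-similar string in $\mathbb{A}_0$; by Remark~\ref{RM1} applied coordinatewise, its Minkowski--Bouligand dimension is at most $1$.

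The step that requires real care --- and the main obstacle --- is making the ``infinite product of iterated function systems'' precise. One must verify that $\Phi_{\mathbb{A}_0}$ consists of well-defined self-maps of $\mathbb{A}_0$ with a common contraction bound, adapt Hutchinson's fixed-point argument to the restricted topological product $\mathbb{A}_0$ so as to identify $\prod_p\mathcal{CS}_p$ as the \emph{unique} nonempty compact fixed set of $\Phi_{\mathbb{A}_0}$, and --- above all --- organize the first substrings contributed by infinitely many primes consistently with the definition of a fractal string as a \emph{sequence of lengths} rather than a family of excised open components, the very point stressed in the Remark following Theorem~\ref{Main Theorem}. Carrying this out rigorously is, in effect, the same matter as settling the definitional questions raised in the subsection on $\mathcal{L}_{\frac{1}{2}}$, namely what the correct notion of a (self-similar) adèlic fractal string and of its geometric zeta function should be.
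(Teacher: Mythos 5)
Your approach is in the same spirit as the paper's, but there is one genuine divergence worth flagging. The paper does not argue about the string directly: it introduces the adèlic Cantor \emph{set} $\mathcal{C}_{\mathbb{A}_0}=\prod_p \mathcal{C}_p$, takes $\Phi=\{\Phi_2,\Phi_3,\Phi_5,\dots\}$ (with each $\Phi_p=\{px,\,2+px,\dots,(p-1)+px\}$ acting trivially on the other coordinates, just as you set it up), and verifies the exact invariance $\Phi(\mathcal{C}_{\mathbb{A}_0})=\mathcal{C}_{\mathbb{A}_0}$ coordinatewise, together with the fact that $\Phi$ is a contraction in the Hausdorff metric induced by the norm $\prod_p|\cdot|_p$. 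That makes $\mathcal{C}_{\mathbb{A}_0}$ the unique compact attractor of $\Phi$, and the string $\mathcal{CS}_{\mathbb{A}_0}$ inherits self-similarity as the complementary string. You instead try to exhibit $\mathcal{CS}_{\mathbb{A}_0}$ itself as ``fixed by'' the IFS; but your own self-referential decomposition
\[
\mathcal{CS}_{\mathbb{A}_0}=\bigl(\text{first substrings}\bigr)\,\sqcup\,\Phi_{\mathbb{A}_0}\bigl(\mathcal{CS}_{\mathbb{A}_0}\bigr)
\]
shows $\Phi_{\mathbb{A}_0}(\mathcal{CS}_{\mathbb{A}_0})$ is a \emph{proper} subset of $\mathcal{CS}_{\mathbb{A}_0}$, so the string is not a fixed point of $\Phi_{\mathbb{A}_0}$ in Hutchinson's sense --- it is the Cantor set that is. Your route is salvageable (self-similar strings are standardly defined via exactly such a self-referential recursion for the lengths), but the paper's choice of working with the set is cleaner precisely because it avoids that terminological slip.

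On the technical concerns you raise at the end: the paper, too, treats the ``infinite product of IFS'' somewhat tersely (``it is easy to check that $\Phi$ is a contraction''), so your caution is well placed. The key observations that make it work are the ones you identify: $\widehat{\mathbb{Z}}=\prod_p\mathbb{Z}_p$ is a compact open subring of $\mathbb{A}_0$, each constituent map scales exactly one coordinate by $p^{-1}$ and fixes the rest, and with the multiplicative norm $\prod_p|\cdot|_p$ this gives a uniform contraction ratio $\le 1/2$. Your remark that the $p=2$ factor is not self-similar on its own (its IFS has a single map) but that this defect vanishes in the product mirrors the paper's remark about $\mathcal{C}_2$ and is a good observation.
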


\begin{proof}
Let $\Phi=\{\Phi_2, \Phi_3, \Phi_5,  \ldots\}$ be an iterated function system in the set of finite ad\`eles $\mathbb{A}_0$, where
each $\Phi_p=\{px, 2+px, \ldots, p-1+px\}$ is an iterated function system of similarity contraction mappings in $\mathbb{Z}_p$ 
that acts trivially on every other  components of $\mathbb{A}_0$.
Moreover, let 
$\mathcal{C}_{\mathbb{A}_0}=\mathcal{C}_2 \times \mathcal{C}_3\times \mathcal{C}_5 \times \dots$ 
be the ad\`elic Cantor set in $\mathbb{A}_0$.
Then,
it is easy to check that $\Phi$ is a contraction
with respect to the Hausdorff metric induced by the standard norm 
$\prod_{p=2}^{\infty}|\cdot|_p$ on $\mathbb{A}_0$,
and
$\Phi(\mathcal{C}_{\mathbb{A}_0})=\mathcal{C}_{\mathbb{A}_0}$
(which is the natural definition of a self-similar fractal)
since
\[
\Phi(\mathcal{C}_{\mathbb{A}_0})
= \Phi_2(\mathcal{C}_2 )\times \Phi_3(\mathcal{C}_3) \times  \Phi_5(\mathcal{C}_5) \times \ldots
= \mathcal{C}_2 \times \mathcal{C}_3  \times \mathcal{C}_5 \times \dots
=\mathcal{C}_{\mathbb{A}_0}.
\]
Thus, the ad\`elic Cantor set $\mathcal{C}_{\mathbb{A}_0}$ is self-similar. 
\end{proof}

\subsection{Ad\`elic Cantor--Smith string $\mathcal{CS}_{\mathbb A}$}

For each positive integer $m>2$,
let~$\mathcal{C}_m$ be the general Cantor set as constructed by Smith in 1875 \cite{Smith}.
Then,
the topological complement of $\mathcal{C}_m$ in $[0,1]$ is an ordinary fractal string $\mathcal{CS}_m$ consisting of all the deleted segments in the construction of $\mathcal{C}_m$. 
Thus,
 $\mathcal{CS}_m$ can be represented as a sequence of lengths $l_n=m^{-n}$ with multiplicity $\mu_n=(m-1)^{n-1}$,
 for each positive integer $n$.
Therefore,
the geometric zeta function $\zeta_{\mathcal{CS}_m}$ for $\mathcal{CS}_m$ is given by 
\[
\zeta_{\mathcal{CS}_m}(s)=\frac{1}{m^s+1-m},\ \mbox{ for all }s\in \mathbb C. 
\]
When $m=3$, $\mathcal{CS}_3$ is the ordinary Cantor string $\mathcal{CS}$ as constructed and studied by the first author and C.\ Pomerance in \cite{Lap2, Lap3,LapPo1} and then expounded upon in \cite{LvF1, LvF2, LvF3}.

The Cartesian product of $\mathcal{CS}_m$ 
and the ad\`elic Cantor string $\mathcal{CS}_{\mathbb{A}_0}$ 
is a self-similar ad\`elic string $\mathcal{CS}_{\mathbb A}$
in the ring of ad\`eles $\mathbb{A}$. 
We call it
 the ad\`elic Cantor--Smith string.  

The formal infinite product of meromorphic functions 
\[
\zeta_{\mathcal{CS}_{\mathbb A}}(s)=
\zeta_{\mathcal{CS}_m}(s)\cdot
\zeta_{\mathcal{CS}_{\mathbb{A}_0}}(s)=
\frac{1}{m^s+1-m}\cdot  \frac{1}{2^s-1}
\prod_{p>2}\frac{p-1}{2p^s-p-1} 
\]
could be a geometric zeta function $\zeta_{\mathcal{CS}_{\mathbb A}}$ for the ad\`elic Cantor--Smith string $\mathcal{CS}_{\mathbb A}$.
However, this product converges only for $s=1$,
 with 
 $\zeta_{\mathcal{CS}_{\mathbb A}}(1)=1$.

\section{Ad\`elic Euler string $\mathcal{E}_{\mathbb{A}_0}$}

For each  $p\in\mathcal{P}$, 
let $\mathcal{E}_p=\bigcup_{n=0}^{\infty}(a_n+p^n\mathbb{Z}_p)$ 
be the $p$-adic Euler string 
with the geometric zeta function
\[
\zeta_{\mathcal{E}_p}(s)=\frac{1}{1-\frac{1}{p^s}}
\]
as constructed by the authors in \cite{LLvF2}.
Then,
the infinite Cartesian product
\[
\prod_{p\in\mathcal{P}}\mathcal{E}_p
\]
can be considered as an ad\`elic fractal string $\mathcal{E}_{\mathbb{A}_0}$ in the set of finite ad\`eles $\mathbb{A}_0$. 
We call $\mathcal{E}_{\mathbb{A}_0}$  the ad\`elic Euler string. 
The ad\`elic Euler string $\mathcal{E}_{\mathbb{A}_0}=\prod_{p}\mathcal{E}_p$ is not self-similar since every $\mathcal{E}_p$ is not self-similar because its dimension is zero. 

Let $\zeta_{\mathcal{E}_p}(s)$ be the geometric zeta function of the $p$-adic Euler string $\mathcal{E}_p$, 
then the infinite product of complex meromorphic functions
\[
\prod_{p\in\mathcal{P}} \zeta_{\mathcal{E}_p}(s)=\prod_{p\in\mathcal{P}}\frac{1}{1-\frac{1}{p^s}}=\sum_{n\in \mathbb N}\frac{1}{n^s}
\] 
is the Riemann zeta function $\zeta(s)$. 
Therefore, we may consider the Riemann zeta function $\zeta(s)$ 
as the geometric zeta function $\zeta_{\mathcal{E}_{\mathbb{A}_0}}(s)$ of the ad\`elic Euler string $\mathcal{E}_{\mathbb{A}_0}$; 
that is to say:
\[
\zeta_{\mathcal{E}_{\mathbb{A}_0}}(s)=\zeta(s),\ \mbox{ for all }s\in \mathbb C. 
\]
With this in mind,
 we may consider the only simple pole of the Riemann zeta function $\zeta(s)$ at $s=1$ as the global Minkowski-Bouligand dimension of the ad\`elic Euler string 
$\mathcal{E}_{\mathbb{A}_0}$.

\subsection{Ad\`elic Euler--Riemann string $\mathcal{E}_{\mathbb A}$}

Let $\mathcal{E}_{\mathbb{A}_0}=\prod_{p}\mathcal{E}_p$
 be the ad\`elic Euler string and the local positive measure
$h=\sum_{n\in \mathbb N}\delta_{\{n\}}$
 be the harmonic string (as in \cite{LvF1, LvF2, LvF3}),
then the Cartesian product 
\[
\mathcal{E}_{\mathbb A}=
h\times \mathcal{E}_{\mathbb{A}_0}
\]
can be considered as an ad\`elic fractal string  in the ring of ad\`eles $\mathbb{A}$. 
We call it the ad\`elic Euler--Riemann string.

Let $\zeta_{\mathcal{E}_{\mathbb{A}_0}}(s)$ be the geometric zeta function of the ad\`elic Euler string $\mathcal{E}_{\mathbb {A}_0}$ 
 and $\zeta_{h}(s)$ be the geometric zeta function of the harmonic string $h$, 
then their product 
\[
\zeta_{h}(s)\cdot \zeta_{\mathcal{E}_{\mathbb{A}_0}}(s)=\zeta(s)\cdot\zeta(s) 
\]
is the square of the Riemann zeta function,
which could be viewed as the geometric zeta function $\zeta_{\mathcal{E}_{\mathbb{A}}}(s)$
for the ad\`elic Euler--Riemann string $\mathcal{E}_{\mathbb{A}}$.

\section{Epilogue}

In \emph{Number Theory as the Ultimate Physical Theory}, Igor  Volovich has suggested that $p$-adic numbers can possibly be used to describe the geometry of spacetime at very high energies, and hence, very small scales, 
because measurements in the `archimedean' geometry of spacetime at fine scales have no certainty \cite{V}. 
Moreover, Stephen Hawking and other authors have also suggested that the fine scale structure of spacetime may be fractal \cite{GH, HI, N, WF}. 
Furthermore, 
the first author has suggested that 
fractal strings and fractal membranes may be related to T-duality in string theory 
and the functional equation of the Riemann zeta function~$\zeta(s)$; 
and that the $p$-adic and ad\`elic analogues of these notions may be helpful for understanding the underlying noncommutative spacetimes and their moduli spaces~\cite{Lap4}. 

The global theory of complex dimensions for ad\`elic fractal strings would
provide a natural and unified framework for understanding the vibrations and resonances in the geometry and the spectrum of ad\`elic fractal strings 
as well as  the pole and zeros of the Riemann zeta function $\zeta(s)$. 
The theory would also precisely describe the oscillatory nature intrinsic to the geometry of ad\`elic fractal strings 
as geometric waves propagating through the space of scales that lies beneath the surface of the ad\`elic fractal strings.
Moreover,
 it would  shed light on the Riemann hypothesis for the Riemann zeta function $\zeta(s)$ via the inverse spectral problems for ad\`elic fractal strings 
\cite{LapMa1,Lap3,HerLap, Lap4, LvF1, LvF2, LvF3, MvF, MvF3}.

\bibliographystyle{amsplain}

\end{document}